  \newtheorem{defi}{Definition}[section]
  \newtheorem{rem}[defi]{Remark}
  \newtheorem{exa}[defi]{Example}
  \newtheorem{exas}[defi]{Examples}}
  \newtheorem{prop}[defi]{Proposition}
  \newtheorem{thm}[defi]{Theorem}
\newcommand{\PP}{{\mathbb P}}
\newcommand{\cC}{{\mathcal C}}
\newcommand{\cD}{{\mathcal D}}
\newcommand{\cG}{{\mathcal G}}
\newcommand{\cR}{{\mathcal R}}
\newcommand{\cS}{{\mathcal S}}
\newcommand{\cT}{{\mathcal T}}
\newcommand{\fC}{{\mathfrak C}}
\newcommand{\Aut}{{\mathrm{Aut}}}
\newcommand{\End}{{\mathrm{End}}}
\newcommand{\Char}{{\mathrm{char}}}
\newcommand{\T}{{\mathrm{T}}}
\newcommand{\id}{{\mathrm{id}}}
\newcommand{\dis}
                 {{\mathrel{\scriptstyle{\triangle}}}}
\newcommand{\eps}{{\varepsilon}}
\newcommand{\GF}{{\mathrm{GF}}}
\newcommand{\GL}{{\mathrm{GL}}}
\newcommand{\M}{{\mathrm{M}}}
\let\phi=\varphi
\let\theta=\vartheta
\newcommand{\DelimArray}[4]{\left#1\begin{array}{*{#3}{r}}#4\end{array}\right#2}
\newcommand{\SDelimArray}[4]{\hbox{\scriptsize\arraycolsep=.5\arraycolsep
  $\left#1\!\!\begin{array}{*{#3}{r}}#4\end{array}\!\!\right#2$}}
\newcommand{\Mat}{\DelimArray()}
\newcommand{\SMat}{\SDelimArray()}
\newenvironment{proof}
    {\begin{trivlist} \item {\sl Proof:}} 
    {\/ $\square$ \end{trivlist}}
\date{\normalsize
{\em Dedicated to Walter Benz on the occasion of his 70th birthday.}
}
\begin{document}
\title{Projective Representations\\
\vspace{.3cm} {II. Generalized chain geometries}}
\author{Andrea Blunck\thanks{Supported by a Lise Meitner
 Research Fellowship
of the Austrian Science Fund (FWF), projects M529-MAT, M574-MAT.} \and {Hans
Havlicek}}

\maketitle

\begin{abstract} \noindent
In this paper, projective representations of generalized chain geometries are
investigated, using the concepts and results of \cite{blu+h-00a}. In
particular, we study under which conditions such a projective representation
maps the chains of a generalized chain geometry $\Sigma(F,R)$ to reguli; this
mainly depends on how the field $F$ is embedded in the ring $R$. Moreover, we
determine all bijective morphisms of a certain class of generalized chain
geometries with the help of projective representations.

 \noindent {\em Mathematics Subject Classification} (1991):
51B05,  51A45, 51C05.
\end{abstract}
\parskip .8mm
\parindent0cm

\section{Introduction}
In  \cite{blu+h-00a}, which is Part I of this publication,  we considered the
projective line $\PP(R)$ over a ring $R$ with $1$ and constructed projective
representations of $\PP(R)$ with the help of unitary $(K,R)$-bimodules, where
$K$ is a not necessarily commutative field. Such a  $(K,R)$-bimodule $U$ gives
rise to a projective representation $\Phi$ that maps $\PP(R)$ into the set
$\cG$ of those subspaces of the projective space $\PP(K,U\times U)$ that are
isomorphic to one of their complements.

If $R$ contains a subfield  $F$ with $1\in F$, then the $F$-sublines turn
$\PP(R)$ into a {\em generalized chain geometry} $\Sigma(F,R)$, compare
\cite{blu+h-99a}. Ordinary {\em chain geometries}, where $R$ is an $F$-algebra,
are studied, e.g., in \cite{benz-73} and \cite{herz-95}.

We want to investigate  the images of the chains of a generalized chain
geometry $\Sigma(F,R)$ under projective representations of $\PP(R)$.
It is well known from
\cite{herz-95}, Section 4,  that chains appear as reguli if $R$ is a
finite-dimensional $F$-algebra, $F=K$,  and the representation $\Phi$ is
injective. In general, this is no longer true. In view of the various examples
in the present paper,  a unified geometric description of the $\Phi$-images of
chains seems very difficult. Hence we focus our attention to those cases where
the $\Phi$-images of chains are reguli in the sense of \cite{blunck-00a}, i.e.,
reguli in not necessarily pappian spaces of arbitrary dimension.

As the $\Phi$-images of any two chains are projectively equivalent, it suffices
to discuss the $\Phi$-image of the standard chain $\PP(F)\subset\PP(R)$. We
proceed in  two steps:

In Sections 2 and 3 we discuss arbitrary projective representations  of
$\PP(F)$ and characterize those  where $\PP(F)$ goes over to a regulus (Theorem
\ref{regulus}) or a ``direct sum'' of reguli (Theorem \ref{quasiReg}). Such
representations exist for all fields $F$.

Next, in Section 4, we take into account that $F\subset R$. Then a projective
representation $\Phi$ of $\PP(R)$ determines the projective representation
$\Phi|_{\PP(F)}$  of $\PP(F)$; it  depends essentially on how $F$ is embedded
in  $R$. There are cases where $\Sigma(F,R)$ does not admit any projective
representation mapping its chains to reguli (see Example \ref{noRep}).

Finally, in Section 5 we use projective representations in order to determine
all bijective morphisms between chain geometries over rings of $2\times
2$-matrices.

Throughout this paper we  adopt the notions of \cite{blu+h-00a}. All our rings
have a unit element which is preserved by homomorphisms, inherited by subrings,
and acts unitally on modules.

\section{Transversals}

Let $R$ be any ring. We recall from \cite{blu+h-00a}, Section 4, that a
projective representation of the projective line $\PP(R)$ is given by means of
a left vector space $U$ over a not necessarily commutative field $K$ and a ring
homomorphism $\phi:R\to\End_K(U)$, or, equivalently, by means of a
$(K,R)$-bimodule $U$. The projective representation obtained from these data is
called $\Phi$ and maps $\PP(R)$ into the set $\cG$ of those subspaces of the
projective space $\PP(K,U\times U)$ that are isomorphic to one of their
complements.

As mentioned before, in the classical case  chains go over to  reguli. Since
reguli have many transversals, we
start by defining transversals of an arbitrary subset $\cS$
of $\cG$.

\begin{defi}
Let  $T$ be a line in $\PP(K,U\times U)$ and let  $\cS$ be a subset of $\cG$.
Then $T$ is a
\begin{enumerate}
\item  {\em weak transversal} of $\cS$, if $T$ meets each element
of $\cS$ in a unique point.
\item {\em transversal} of $\cS$, if $T$ is a weak transversal and
each point
of $T$ lies on an element of~$\cS$.
\end{enumerate}
\end{defi}

We know that $\PP(R)^\Phi$ contains $U\times\{0\}$, $\{0\}\times U$, and
$\{(u,u)\mid u\in U\}$. So one directly sees that a weak transversal of
$\PP(R)^\Phi$ must have the form
\begin{equation} T=Ku\times Ku
\end{equation}
for a suitable $u\in U\setminus\{0\}$. This description shows in particular
that any two weak transversals of $\PP(R)^\Phi$ are skew. Moreover, if
$U=\{0\}$, then of course $\PP(R)^\Phi$ does not have any weak transversals;
the associated projective space is empty and hence contains no lines.

We can characterize the elements $u$ of $U$ that give rise to (weak)
transversals of $\PP(R)^\Phi$:

\begin{prop}\label{eigen}
For $u\in U\setminus\{0\}$ the following statements are equivalent:
\begin{enumerate}
\item  $T=Ku\times Ku$  is
a weak transversal of $\PP(R)^\Phi$.
\item $u$ is an eigenvector
of all $\rho_a:U\to U:u\mapsto u\cdot a$ ($a\in R$).
\item $Ku$ is a sub-bimodule of the $(K,R)$-bimodule $U$.
\end{enumerate}
In this case, $\alpha:R\to K$ with
$u^{\rho_a}=a^\alpha u$ is a homomorphism of rings, and $T$ is a
transversal of $\PP(R)^\Phi$ exactly if $\alpha$ is surjective,
or,
equivalently, if $Ku$ is a cyclic submodule of the right $R$-module $U$.
\end{prop}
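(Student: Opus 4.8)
The plan is to prove the triple equivalence first, then the two supplementary assertions. Throughout I use that, by the construction in \cite{blu+h-00a}, a point $R(a,b)\in\PP(R)$ has image $R(a,b)^\Phi=\{(w^{\rho_a},w^{\rho_b})\mid w\in U\}$; this is consistent with the three subspaces listed above (take $(a,b)=(1,0),(0,1),(1,1)$). The equivalence of (b) and (c) is immediate: since $K$ and $R$ act on opposite sides of the bimodule $U$, one has $(\lambda u)^{\rho_a}=\lambda u^{\rho_a}$, so $Ku$ is closed under the right $R$-action exactly when $u^{\rho_a}\in Ku$ for every $a\in R$, which is the eigenvector condition.

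For (a)$\Rightarrow$(b) I would test $T$ against the points $R(1,a)$, which are admissible for every $a\in R$. Here $R(1,a)^\Phi=\{(w,w^{\rho_a})\mid w\in U\}$, so a nonzero vector of $T\cap R(1,a)^\Phi$ comes from some $w=\lambda u$ with $w^{\rho_a}=\lambda u^{\rho_a}\in Ku$. If $u^{\rho_a}\notin Ku$ this forces $\lambda=0$, i.e.\ $T\cap R(1,a)^\Phi=\{0\}$ and $T$ misses this element of $\PP(R)^\Phi$; hence a weak transversal $T$ must satisfy $u^{\rho_a}\in Ku$ for all $a$, which is (b).

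The converse (b)$\Rightarrow$(a) is where the main computation sits. Assume $u^{\rho_a}=a^\alpha u$ for all $a$ and take an arbitrary admissible $(a,b)$, completed to an invertible matrix over $R$ whose inverse has entries $a^*,b^*,c^*,d^*$, so that $aa^*+bc^*=1$. If $(w^{\rho_a},w^{\rho_b})\in T$, write $w^{\rho_a}=\xi u$ and $w^{\rho_b}=\eta u$; the right-module identity $w=w^{\rho_a\rho_{a^*}}+w^{\rho_b\rho_{c^*}}$ then yields $w=\xi(a^*)^\alpha u+\eta(c^*)^\alpha u\in Ku$. Thus every intersection vector arises from some $w\in Ku$, and $T\cap R(a,b)^\Phi=K(a^\alpha u,b^\alpha u)$. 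This is a single projective point provided $(a^\alpha,b^\alpha)\neq(0,0)$, which I obtain by applying $\alpha$ entrywise to the invertible completing matrix and using that $\alpha$ preserves products and $1$. Hence $T$ is a weak transversal.

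Finally, $\alpha$ is a ring homomorphism: additivity follows from $\rho_{a+b}=\rho_a+\rho_b$, multiplicativity from $\rho_{ab}=\rho_a\rho_b$ together with $(a^\alpha u)^{\rho_b}=a^\alpha u^{\rho_b}$, and $1^\alpha=1$ from $\rho_1=\id$. For the transversal statement, the weak-transversal analysis shows that the points of $T$ lying on some element of $\PP(R)^\Phi$ are precisely those of the form $K(a^\alpha u,b^\alpha u)$ with $(a,b)$ admissible; so $T$ is a transversal iff every point $K(\xi u,\eta u)$ of $T$ has this shape, i.e.\ iff the induced map $\PP(R)\to\PP(K)$ is onto. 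In parallel I would observe that under the identification $u\cdot a=a^\alpha u$ the right $R$-module $Ku$ equals $\{\nu u\mid\nu\in K\}$ with $(\mu u)\cdot a=(\mu a^\alpha)u$, so $Ku=(\mu u)R$ for some $\mu\neq0$ exactly when $\mu\,\im\alpha=K$, that is, exactly when $\alpha$ is surjective, which settles the cyclic reformulation. The delicate endpoint is the passage from ``every point of $T$ is covered'' to surjectivity of $\alpha$: one must control the normalization of the representative $(a,b)$ so that covering the point $K(u,ku)$ genuinely produces a preimage of $k$ under $\alpha$; this is the step I expect to require the most care.
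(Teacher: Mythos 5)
Your handling of the equivalence (a) $\Leftrightarrow$ (b) $\Leftrightarrow$ (c) is correct and in fact more self-contained than the paper's proof: for (a) $\Rightarrow$ (b) the paper tests $T$ against the element $\{(u^{\rho_a},u)\mid u\in U\}=R(a,1)^\Phi$ exactly as you test it against $R(1,a)^\Phi$, while for (c) $\Rightarrow$ (a) the paper merely cites Proposition 4.8 of Part I; your direct computation with a completing matrix (using $aa^*+bc^*=1$ to show that every vector of $T\cap R(a,b)^\Phi$ comes from some $w\in Ku$, so that the intersection is exactly the single point $K(a^\alpha u,b^\alpha u)$) is a clean substitute for that citation. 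The homomorphism property of $\alpha$, the equivalence ``$\alpha$ surjective $\Leftrightarrow Ku$ cyclic over $R$'', and the implication ``$\alpha$ surjective $\Rightarrow T$ transversal'' are all fine; the paper compresses all of this into ``a straightforward calculation''.

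The step you flag as delicate is, however, a genuine gap, and it cannot be closed at the stated level of generality. By your own computation the covered points of $T$ are exactly the $K(a^\alpha u,b^\alpha u)$ with $(a,b)$ admissible, so $K(u,ku)$ is covered as soon as $k=(a^\alpha)^{-1}b^\alpha$ for some admissible pair with $a^\alpha\ne 0$; since $a$ need not be a unit of $R$, the element $(a^\alpha)^{-1}$ need not lie in $\im\alpha$, and covering all points of $T$ does not force $\im\alpha=K$. Concretely, take $R=\ZZ$, $K=U=\QQ$, $u=1$: then $\alpha$ is the non-surjective inclusion $\ZZ\hookrightarrow\QQ$ and $Ku=\QQ$ is not a cyclic $\ZZ$-module, yet every point $\QQ(x,y)$ of $T=\QQ\times\QQ$ equals $\ZZ(a,b)^\Phi$ for a coprime integer pair, so $T$ is a transversal. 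Hence ``transversal $\Rightarrow\alpha$ surjective'' fails for arbitrary $R$. It does hold when $R$ is a field (then every point of $\PP(R)$ has a representative $(1,k)$ or $(0,1)$, so the covered points are precisely $K(0,u)$ and $K(u,k^\alpha u)$ with $k\in R$, and the normalization problem disappears), and that is the only situation in which the paper subsequently applies the criterion. So your instinct about the normalization of representatives was exactly right; to finish, you must either restrict to the case where admissible pairs can be normalized (e.g.\ $R$ a field) or record that the final assertion of the Proposition requires such a restriction.
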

\begin{proof}
(a) $\Rightarrow$ (b): The line $T$ meets $\{(u^{\rho_a},u)\mid u\in U
\}\in\PP(R)^\Phi$. So $(u^{\rho_a},u)=(ku,u)$ for some $k\in K$. (b)
$\Rightarrow$ (c) is clear. (c) $\Rightarrow$ (a): compare \cite{blu+h-00a},
Proposition 4.8. The rest is a straightforward calculation.
\end{proof}

Now we consider the special case that the ring $R=:F$ is a field. If in
addition $U\ne\{0\}$, then $\phi:F\to\End_K(U)$ is injective, i.e., $\Phi$ is
faithful, and $\Char(F)=\Char(K)$. Moreover,  the homomorphism $\alpha:F\to K$,
with $u^{\rho_x}=x^\alpha u$, associated to the eigenvector $u$ of all
$\rho_x$, $x\in F$,  is a monomorphism of fields. It is  an isomorphism of
fields, exactly if $T=Ku\times Ku$ is a transversal. This implies that
$\PP(F)^\Phi$ cannot have any weak transversals if there is no monomorphism
$\alpha:K\to F$, and $\PP(F)^\Phi$ cannot have any transversals if $F\not\cong
K$.

But also in the special case of $F=K$ there are examples where
$\PP(F)^\Phi=\PP(K)^\Phi$ does not have any weak transversals at all. It can
also occur that $\PP(F)^\Phi$ has no transversals but many weak transversals,
that $\PP(F)^\Phi$ has exactly one transversal, and  that $\PP(F)^\Phi$ has at
least one transversal:

\begin{exas}\label{exasTrsv}
\begin{enumerate}
\item
Let $L$ be a commutative field, $K$ a subfield of $L$ and $\alpha\in\Aut(L)$
such that $K^\alpha\not\subset K$. Let $U\ne\{0\}$  be a left vector space over
$L$. Then $U$ becomes a $(K,K)$-bimodule by setting $k\cdot u:=ku$, $u\cdot
k:=k^\alpha u$. Consider a $k\in K$ with $k^\alpha\not\in K$. For each $u\in
U\setminus\{0\}$ we have $u^{\rho_k}=k^\alpha u\not\in K u$, so $u$ is not an
eigenvector of $\rho_k$. This means that $\PP(K)^\Phi$ has no weak
transversals.
\item
Let $K$ be a commutative field and let $\alpha:K\to K$ be a non-surjective
monomorphism of fields. Let  $U$ be any left vector space over $K$. Then $U$
becomes a $(K,K)$-bimodule by setting  $u\cdot k:=k^\alpha u$. Consider an
arbitrary  $u\in U\setminus\{0\}$. For each $k\in K$ we see that  $u$ is an
eigenvector of  $\rho_k$, with associated monomorphism $\alpha$. So $T=Ku\times
Ku$ is a weak transversal of $\PP(K)^\Phi$, but no transversal because $\alpha$
is not surjective.
\item Let $K$ be a commutative field that is an inseparable quadratic
extension of a subfield~$L$. Then $K=L+Li$ with  $i^2\in L$, and $\Char(K)=2$.
One can easily check that $$\phi:K\to \M(2\times 2,K): a+bi\mapsto \Mat2{a+bi&0
\ \ \   \\b \ \ \ &a+bi}$$ is an injective homomorphism of rings and thus makes
the left vector space $U=K^2$ a faithful $(K,K)$-bimodule. Obviously, $u=(1,0)$
is an eigenvector of all $\rho_k$, $k\in K$, with associated automorphism
$\alpha=\id$. So $T=Ku\times Ku$ is a transversal of $\PP(K)^\Phi$. Moreover,
for $k\in K\setminus L$, there is no eigenvector of $\rho_k$ that does not
belong to $Ku= K(1,0) $. Hence $\PP(K)^\Phi$ does not have any other
transversals.
\item
Let  $K$ be a subfield of a ring  $R$. Then $U=R$ is a $(K,K)$-bimodule with
$k\cdot u:=ku$, $u\cdot k:=uk$. The weak transversals of $\PP(K)^\Phi$ are
exactly the lines $Ku\times  Ku$ with $u\in R\setminus\{0\}$ satisfying
$uK\subset Ku$. Such a line is a transversal if, and only if, $uK=Ku$. In
particular, $\PP(K)^\Phi$ has at least one transversal, namely, $T=K\times K$,
where $u=1$.
\end{enumerate}
\end{exas}

Let $F$ be a field, and let $U$ be a $(K,F)$-bimodule. We want to study the
transversals of the projective model $\PP(F)^\Phi$ more closely. Because of
Proposition \ref{eigen} the existence of a transversal implies that $F\cong K$.
So we now restrict ourselves to the case that $F=K$. Since
$\PP(F)^\Phi=\PP(K)^\Phi$ consists of pairwise complementary subspaces, each
point of a transversal of $\PP(K)^\Phi$ lies on exactly one element of
$\PP(K)^\Phi$. So for two transversals $T_1$, $T_2$ of $\PP(K)^\Phi$ it makes
sense to define the following mapping
\begin{equation}\label{stern}
 \pi_{12}:T_1\to T_2: T_1\cap p^\Phi\mapsto T_2\cap p^\Phi,
\end{equation}
 where $p\in\PP(K)$. We can also describe this mapping
algebraically: Let $T_i=Ku_i\times Ku_i$, with $\alpha_i\in\Aut(K)$ associated
to the eigenvector $u_i$ ($i=1,2$). Then a point
$K(x^{\alpha_1}u_1,y^{\alpha_1}u_1)$ of $T_1$ (with $x,y\in K$) has the
${\pi_{12}}$-image $K(x^{\alpha_2}u_2,y^{\alpha_2}u_2)$.

We call $T_1$ and $T_2$ {\em projectively linked}, if $\pi_{12}$ is a
projectivity. The relation ``projectively linked'' is an equivalence relation
on the set of all transversals of $\PP(K)^\Phi$. Obviously,  the transversals
$T_1$ and $T_2$ are projectively linked if, and only if,
$\alpha_1^{-1}\alpha_2$ is an inner automorphism of $K$.

\section{Reguli}

We now study the question under which conditions the image of
$\PP(F)$ under a projective representation $\Phi$ is a regulus. As
before, we restrict ourselves to the case $F=K$.  We refer to
\cite{blunck-00a} for a synthetic definition of a regulus in a not
necessarily pappian space of arbitrary dimension. However, this
definition is rather involved and for our purposes the description of
a regulus in formula (\ref{lambda3}) below will be sufficient. We
always assume that $U\ne\{0\}$. Hence by our global assumptions each
representation $K\to\End_K(U)$ is faithful. Note that for the trivial
case $U=\{0\}$ the theorems of this section are also true, if one
defines that a regulus in the empty projective space $\PP(K,U\times
U)$ consists exactly of the empty set.

 Let $U\ne\{0\}$ be a left vector space over $K$. The
ring $\End_K(U)$ contains many copies of $K$: Let $(b_i)_{i\in I}$ be a basis
of $U$. Then
\begin{equation}\label{lambda1}
  \lambda:K\to \End_K(U):k\mapsto k^\lambda,
\end{equation}
 where $ k^\lambda$ is the linear mapping given by
\begin{equation}\label{lambda2}
 b_i\mapsto kb_i,
\end{equation}
embeds $K$ into $\End_K(U)$. The projective representation $\Lambda:
\PP(K)\to\cG$ associated to $\lambda$ has the form
\begin{equation}\label{lambda3}
 K(k,l)\mapsto \Big\{ \Big( \sum_{i\in I} x_i k b_i , \sum_{i\in I} x_i l b_i \Big)
 \mid
 x_i\in K \Big\};
\end{equation}
cf.\ \cite{blu+h-00a}, Theorem 4.2.
 In \cite{blunck-00a}, Theorem 3.1, it is shown that
 the image of $\PP(K)$ under this projective representation
is a regulus. Further, from Proposition \ref{eigen}, if $u=\sum_{i\in I} z_i
b_i\neq 0$ with $z_i$ in the centre of $K$, then $T=Ku\times Ku$ is a
transversal of the regulus $\PP(K)^\Lambda$. Conversely, each weak transversal
of $\PP(K)^\Lambda$ has this form, whence it is a transversal.

\begin{thm}\label{regulus}
Let $\phi:K\to\End_K(U)$ be a representation and let $\Phi$ be the associated
projective representation  of the projective line over $K$ in the projective
space $\PP(K,U\times U)$. Then the following are equivalent:
\begin{enumerate}
 \item $\PP(K)^\Phi$ is a regulus.
 \item Any two transversals of $\PP(K)^\Phi$ are projectively
 linked and all transversals of $\PP(K)^\Phi$ generate
 $\PP(K,U\times U)$.
 \item There exists a basis $(b_i)_{i\in I}$ of $U$ and an
 $\alpha\in\Aut(K)$ such that for all $k\in K$ the mapping
  $\rho_k=k^\phi\in\End_K(U)$ is given
 by $b_i\mapsto k^\alpha b_i$ ($i\in I$).
\end{enumerate}
\end{thm}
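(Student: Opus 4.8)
The plan is to establish the cyclic implications (c) $\Rightarrow$ (a) $\Rightarrow$ (b) $\Rightarrow$ (c), using the explicit description of reguli from formula (\ref{lambda3}) as the pivot. The implication (c) $\Rightarrow$ (a) should be the most direct: if every $\rho_k$ acts on the basis $(b_i)_{i\in I}$ by $b_i\mapsto k^\alpha b_i$, then $\phi$ is precisely the composition of the embedding $\lambda$ from (\ref{lambda1})--(\ref{lambda2}) with the automorphism $\alpha$. Since $\PP(K)^\Lambda$ is a regulus and precomposing the representation with an automorphism of $K$ merely reparametrizes $\PP(K)$ by the induced projectivity, the image $\PP(K)^\Phi$ coincides as a set with $\PP(K)^\Lambda$ and is therefore a regulus.

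For (a) $\Rightarrow$ (b) I would invoke the intrinsic geometry of a regulus as set out in \cite{blunck-00a}: a regulus is generated by its transversals, and any two of its transversals are projectively linked via the canonical perspectivity between them. Concretely, if $\PP(K)^\Phi$ is a regulus, then its transversals are exactly the transversal lines of that regulus; these generate the whole space $\PP(K,U\times U)$, and the map $\pi_{12}$ defined in (\ref{stern}) is realized geometrically by the regulus as a projectivity. One must check that the synthetic notion of transversal from \cite{blunck-00a} agrees with the definition of transversal of $\PP(K)^\Phi$ given here; this follows because each point of a transversal lies on exactly one element of $\PP(K)^\Phi$, as noted just before (\ref{stern}).

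The implication (b) $\Rightarrow$ (c) is where the real work lies, and I expect it to be the main obstacle. Starting from a transversal $T_0=Ku_0\times Ku_0$, I would use the associated automorphism $\alpha_0\in\Aut(K)$ and normalize so that $\alpha=\alpha_0$. The hypothesis that all transversals generate the space lets me choose enough transversals $T_i=Ku_i\times Ku_i$ so that the eigenvectors $u_i$ span $U$; after passing to a spanning subfamily one extracts a basis $(b_i)_{i\in I}$ of $U$ consisting of such eigenvectors. The projective-linkage hypothesis forces all the associated automorphisms $\alpha_i$ to differ from $\alpha_0$ by inner automorphisms of $K$, and since $K$ acts centrally in the relevant sense one rescales each $b_i$ to arrange that every $\alpha_i$ equals the single automorphism $\alpha$. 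The delicate point is to verify that $\rho_k$ then acts diagonally as $b_i\mapsto k^\alpha b_i$ on this common eigenbasis: each $b_i$ is by construction an eigenvector of every $\rho_k$ with eigenvalue $k^\alpha$, so $\rho_k\big(\sum_i x_i b_i\big)=\sum_i x_i k^\alpha b_i$, which is exactly (c). The subtlety is ensuring the linkage condition is strong enough to pin down one uniform $\alpha$ across the whole basis rather than a priori different inner twists on different basis vectors, and this is precisely what the equivalence ``projectively linked iff $\alpha_1^{-1}\alpha_2$ is inner'' is designed to supply.
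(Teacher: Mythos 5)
Your cycle (c) $\Rightarrow$ (a) $\Rightarrow$ (b) $\Rightarrow$ (c) is the same as the paper's, and two of the three legs match the paper's argument in substance: (c) $\Rightarrow$ (a) via $\phi=\alpha\lambda$ and $\PP(K)^\Phi=\PP(K)^\Lambda$, and (b) $\Rightarrow$ (c) via a minimal generating family of transversals $T_i=Ku_i\times Ku_i$, with the inner automorphisms $x\mapsto k_i^{-1}xk_i$ supplied by the linkage hypothesis absorbed by the rescaling $b_i:=k_iu_i$ --- exactly the paper's formula (\ref{formel}). (Your parenthetical ``since $K$ acts centrally in the relevant sense'' is not needed and not quite right; the rescaling works by the direct computation $b_i\cdot k=k_i k^{\alpha_i}u_i=k^{\alpha_j}b_i$.)

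The one leg where you diverge, and where your argument is not yet a proof, is (a) $\Rightarrow$ (b). You ``invoke the intrinsic geometry of a regulus'' to assert that its transversals generate the space and that $\pi_{12}$ is ``realized geometrically by the regulus as a projectivity.'' But \emph{projectively linked} is an algebraic condition defined in this paper ($\alpha_1^{-1}\alpha_2$ inner), not a synthetic axiom of reguli in \cite{blunck-00a}; that $\pi_{12}$ is a projectivity for an arbitrary regulus is precisely the claim to be established, so as stated your appeal is close to circular. The paper closes this gap by citing \cite{blunck-00a}, Lemma 2.8: every regulus is projectively equivalent to the standard regulus $\PP(K)^\Lambda$ of (\ref{lambda3}), so one may compute in that model, where every weak transversal is $Ku\times Ku$ with $u=\sum x_ib_i$ an eigenvector of all the maps (\ref{lambda2}); normalizing one coordinate to $1$ shows every such $u$ has eigenvalue exactly $k$ under $\rho_k$, hence all transversals share the associated automorphism $\id_K$ and are pairwise linked, and the $Kb_i\times Kb_i$ generate. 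You should insert this reduction (or an equivalent concrete computation) to make the step sound; with that repair the proposal is correct.
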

\begin{proof}
(a) $\Rightarrow$ (b): By \cite{blunck-00a}, Lemma 2.8, the  regulus
$\PP(K)^\Phi$ is projectively equivalent to the regulus $\PP(K)^\Lambda$, where
$\lambda$ is given by (\ref{lambda1}) and (\ref{lambda2}). So it suffices to
prove the assertion for $\PP(K)^\Lambda$. Let $T=Ku\times Ku$ be a transversal
of $\PP(K)^\Lambda$, i.e., $u$ is an eigenvector of all mappings
(\ref{lambda2}) with $k\in K$. Put $u=\sum_{i\in I} x_ib_i$, where without loss
of generality one coordinate, say $x_j$, equals $1$. We read off from the
$j$-th coordinate that $u$ goes over to $ku$ under each mapping
(\ref{lambda2}). Since $T$ has been chosen arbitrarily, this means that any two
transversals are projectively linked. Obviously, the transversals $Kb_i\times
Kb_i$ generate the entire space.

 (b) $\Rightarrow$ (c): Let $(T_i)_{i\in I} $
be a minimal family of transversals generating $\PP(K,U\times U)$. Each $T_i$
can be written as $Ku_i\times Ku_i$, where $(u_i)_{i\in I}$ is a basis of $U$,
and there are automorphisms $\alpha_i\in \Aut(K)$ with $u_i\cdot
k=k^{\alpha_i}u_i$. We fix one $j\in I$. As any two transversals are
projectively linked, each product $\alpha_j^{-1}\alpha_i$ is an inner
automorphism, say $x\mapsto k_i^{-1}xk_i$ with $k_i\in K^*$. Now
\begin{equation}\label{formel}
b_i:=k_iu_i,\quad\alpha:=\alpha_j
\end{equation}
have the required properties.

(c) $\Rightarrow$ (a): We observe that the representation $\lambda$ given by
(\ref{lambda1}) and (\ref{lambda2}) and the representation $\phi$ satisfy
$\phi=\alpha\lambda$. Since $\alpha:K\to K$ is a bijection, we have that
$\PP(K)^\Phi=\PP(K)^\Lambda$. As has been remarked before, this is a regulus.
\end{proof}

\begin{rem}
The existence of a basis $(b_i)_{i\in I}$ of $U$ and of automorphisms
$\alpha_i\in\Aut(K)$, such that $u_i\cdot k=k^{\alpha_i}u_i$ holds true for all
$i\in I$ and all $k\in K$, is equivalent to the existence of a family of
transversals of $\PP(K)^\Phi$ which generates the entire space. Let
$(T_i)_{i\in I}$ be a minimal generating family of transversals. If we fix one
$j\in I$, then $$\PP(K)^\Phi=\left\{\bigoplus_{i\in I} p^{\pi_{ji}}\mid p\in
T_j\right\},$$ where the bijections $\pi_{ji}$ are defined according to
(\ref{stern}).
\end{rem}

Note that if the transversals span only a subspace of the entire projective
space, then  similar statements hold true for the trace of $\PP(K)^\Phi$ in
this subspace, since this subspace corresponds to a sub-bimodule of the
$(K,K)$-bimodule $U$ (compare \cite{blu+h-00a}, Proposition 4.8).

\begin{thm}\label{quasiReg}
Assume that $\PP(K,U\times U)$ is spanned by the transversals of $\PP(K)^\Phi$.
Let $\cT_\theta$, $\theta\in\Theta$, be the equivalence classes  of
projectively linked transversals, and put $U_\theta\times U_\theta$ for the
subspace generated by $\cT_\theta$. Then
\begin{equation}\label{direct}
U\times U=\bigoplus_{\theta\in\Theta}(U_\theta\times U_\theta).
\end{equation}
 For each
$\theta\in\Theta$, the trace of $\PP(K)^\Phi$ in $\PP(K,U_\theta\times
U_\theta)$ is a regulus.
\end{thm}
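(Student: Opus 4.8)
The plan is to reduce everything to Theorem \ref{regulus} by first producing, inside each class, an eigenbasis with a single associated automorphism, and then to prove the directness of the sum (\ref{direct}) by a Dedekind-type independence argument adapted to the noncommutative setting.

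First I would fix $\theta\in\Theta$ and record the structure of $U_\theta$. By the definition of the classes, the transversals in $\cT_\theta$ span $U_\theta\times U_\theta$ and any two of them are projectively linked. Choosing a minimal generating subfamily of $\cT_\theta$ and running the computation of the implication (b) $\Rightarrow$ (c) in the proof of Theorem \ref{regulus} verbatim, I obtain a basis $(b^\theta_i)_{i\in I_\theta}$ of $U_\theta$ together with a single $\gamma_\theta\in\Aut(K)$ such that $b^\theta_i\cdot k=k^{\gamma_\theta}b^\theta_i$ for all $i$ and all $k\in K$; in particular each $U_\theta$ is a sub-bimodule, and for distinct classes $\gamma_\theta^{-1}\gamma_{\theta'}$ is non-inner, so $\gamma_\theta\ne\gamma_{\theta'}$ as maps. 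Since the transversals span the whole space, the union $\bigcup_\theta(b^\theta_i)_i$ spans $U$, so it only remains to see that this union is linearly independent.

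The heart of the proof, and the step I expect to be the main obstacle, is this independence across classes; the difficulty is that over a noncommutative $K$ the endomorphisms $\rho_k$ are left-linear but do not act as left scalars on an eigenvector, so naive elimination fails. I would argue by contradiction from a dependence of minimal length, $\sum c_{\theta,i}b^\theta_i=0$ with all $c_{\theta,i}\ne0$. Applying $\rho_k$ and using left-linearity gives $\sum c_{\theta,i}k^{\gamma_\theta}b^\theta_i=0$ for every $k\in K$. Fixing one distinguished term $(\theta_0,i_0)$ with coefficient $c_0$ and subtracting $c_0k^{\gamma_0}c_0^{-1}$ times the original relation eliminates that term, so by minimality every surviving coefficient must vanish, i.e. $c_{\theta,i}k^{\gamma_\theta}=c_0k^{\gamma_0}c_0^{-1}c_{\theta,i}$ for all $k$. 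Rewriting this as $c_{\theta,i}\,k^{\gamma_\theta}c_{\theta,i}^{-1}=c_0\,k^{\gamma_0}c_0^{-1}$ shows that $\gamma_\theta$ and $\gamma_{\theta_0}$ differ by an inner automorphism, whence $\theta=\theta_0$. Thus all terms lie in a single class, contradicting that $(b^{\theta_0}_i)_i$ is a basis of $U_{\theta_0}$ and hence has no nontrivial internal relation. Therefore the union is a basis, $U=\bigoplus_\theta U_\theta$, and passing to $U\times U$ yields (\ref{direct}).

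Finally, for the regulus statement I would use that each $U_\theta\times U_\theta$ is the subspace attached to the sub-bimodule $U_\theta$, so by \cite{blu+h-00a}, Proposition 4.8, the trace of $\PP(K)^\Phi$ in $\PP(K,U_\theta\times U_\theta)$ is exactly the projective model $\PP(K)^{\Phi_\theta}$ of the restricted representation $\phi_\theta:K\to\End_K(U_\theta)$. The basis $(b^\theta_i)_i$ and the automorphism $\gamma_\theta$ exhibit $\phi_\theta$ as satisfying condition (c) of Theorem \ref{regulus}, whence by (c) $\Rightarrow$ (a) this trace is a regulus.
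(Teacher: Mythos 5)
Your proof is correct, but it is organized differently from the paper's. The paper starts from a single \emph{global} minimal generating family of transversals, which yields one basis $(u_i)_{i\in I}$ of all of $U$; it then shows, by computing $u^{\rho_k}$ in two ways for an arbitrary transversal $Ku\times Ku$ with $u=\sum x_iu_i$, that the support $I_u$ lies entirely inside one equivalence class. With that, each $U_\theta$ is spanned by a subset of the one global basis, the subsets are disjoint, and (\ref{direct}) is immediate --- no separate independence argument is needed. You instead build a basis and an automorphism $\gamma_\theta$ \emph{per class} (via Theorem \ref{regulus}(b)$\Rightarrow$(c)) and then must prove independence across classes, which you do with a Dedekind-type minimal-relation argument: applying $\rho_k$ and subtracting $c_0k^{\gamma_0}c_0^{-1}$ times the relation forces $c_{\theta,i}k^{\gamma_\theta}c_{\theta,i}^{-1}=c_0k^{\gamma_0}c_0^{-1}$, so the automorphisms differ by an inner automorphism and all terms collapse into one class. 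The algebraic core (the identity $xk^{\alpha}=k^{\beta}x$ forcing $\alpha^{-1}\beta$ inner) is the same in both proofs, but it is deployed in opposite directions: the paper uses it to show supports respect classes, you use it to show eigenvectors with non-conjugate associated automorphisms are linearly independent. The paper's route is shorter; yours costs an extra argument but isolates that independence statement, which is of some interest in itself, and both proofs finish identically by identifying $U_\theta$ as a sub-bimodule and invoking Theorem \ref{regulus}(c)$\Rightarrow$(a).
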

\begin{proof}
Let $(T_i)_{i\in I}$ be a minimal family of transversals generating the entire
space. Then there is a basis $(u_i)_{i\in I}$ of $U$ such that $T_i=Ku_i\times
Ku_i$ holds for all $i\in I$.  Now suppose that $T=Ku\times Ku$ ($u\in
U\setminus \{0\}$) is any transversal. Then $T\in\cT_\theta$ for some
$\theta\in\Theta$. There exists a finite subset $I_u\subset I$ such that
$u=\sum_{i\in I_u} x_iu_i$ with $x_i\ne 0$ for all $i\in I_u$. For each $k\in
K$, the vectors $x_iu_i$ ($i\in I_u$) and $u$ are eigenvectors of
$\rho_k\in\End_K(U)$. By calculating $u^{\rho_k}$ in two ways, it follows that
these eigenvectors belong to the same eigenvalue, say $k^\alpha$ with
$\alpha\in\Aut(K)$. Hence the transversals $T_i$ ($i\in I_u$) and $T$ are
projectively linked, so that all $T_i$ ($i\in I_u$)  are in class $\cT_\theta$.
This implies that the subspace $U_\theta\times U_\theta$ is spanned by the set
$\{T_i\in\cT_\theta\mid i\in I\}$. Now (\ref{direct}) is obvious, and the
remaining assertion follows from Theorem~\ref{regulus}.
\end{proof}

In the finite-dimensional case  we have, by Theorem \ref{regulus}, that
$\PP(K)^\Phi$ is a regulus if, and only if,  the elements of $K$ are embedded
into $\End_K(U)$ as scalar matrices with respect to some basis. The more
general case treated in Theorem \ref{quasiReg} corresponds to an embedding of
$K$ into $\End_K(U)$ as diagonal matrices, or, in other words, an embedding
where all linear mappings $\rho_k$, $k\in K$, are simultaneously
diagonalizable.

\section{Chains}

Now  we  consider projective representations of generalized chain geometries.
Let $F$ be a subfield of a ring $R$. Then the  projective line $\PP(F)$ is
embedded into $\PP(R)$ via $F(x,y)\mapsto R(x,y)$. We call the subset $\PP(F)$
the {\em standard chain} of $\PP(R)$. Its images under $\GL_2(R)$ are the {\em
chains}, the set of all chains is denoted by $\fC(F,R)$.
 The incidence structure  $\Sigma(F,R)
=(\PP(R),\fC(F,R))$ is  the {\em generalized chain geometry} over $(F,R)$ as
investigated in \cite{blu+h-99a}. Note that some basic properties of such
geometries have already been derived in  \cite{hot-76}, \cite{bart-89}. Recall
that two distinct points of $\PP(R)$ are {\em distant}, if there is a
$\gamma\in\GL_2(R)$ mapping the first point to $R(1,0)$ and the second point to
$R(0,1)$, or, equivalently, if they are joined by a chain.

Let $\Phi$ be a projective representation  of $\PP(R)$, associated to  a
$(K,R)$-bimodule $U$. We want to determine the $\Phi$-images of the chains of
$\Sigma(F,R)$.  Since $F\subset R$, we have that $U$ is at the same time a
$(K,F)$-bimodule, faithful if  $U\ne \{0\}$, which in turn gives rise to the
projective representation $\Phi|_{\PP(F)}$ of $\PP(F)$. By \cite{blu+h-00a},
Remark 4.1, the $\Phi$-images of any two chains of $\Sigma(F,R)$ are
projectively equivalent. So it suffices to study $\PP(F)^\Phi$, and we can make
use of the results of the previous sections.

From Theorem \ref{regulus}  we know which projective representations of
$\PP(F)$ map  $\PP(F)$ to a regulus. Essentially, these representations  are
given by embeddings of $F$ into some $\End_K(U)$, with $K\cong F$, via a basis,
as described in (\ref{lambda1}), (\ref{lambda2}). In particular, every left
vector space over $K$ can be turned into a suitable $(K,F)$-bimodule, if
$K\cong F$. Now, in addition,  $F$ is a subfield of  $R$, and we are looking
for $(K,R)$-bimodules such that the induced $(K,F)$-bimodule gives rise to
reguli.   Under certain conditions on $F\subset R$ no projective representation
of $\Sigma(F,R)$ maps chains to reguli:

\begin{exa}\label{noRep}
Let $F$ be commutative, and assume that the multiplicative group $F^*$ is not
normal in $R^*$. Consider a projective representation $\Phi$ of $\Sigma(F,R)$
into some non-empty projective space $\PP(K,U\times U)$. If $K\not\cong F$ then
the $\Phi$-images of the  chains do not have any transversals and hence cannot
be reguli. So let $K\cong F$. Then  any three pairwise complementary subspaces
of
 $\PP(K,U\times U)$ lie together in exactly one
regulus (compare \cite{grund-81}, Proposition). By \cite{blu+h-99a}, Theorem
2.4, any three pairwise distant points of $\Sigma(F,R)$ are joined by more than
one chain, which again implies that the $\Phi$-images of the  chains cannot be
reguli.

A class of  examples for this is described in \cite{blu+h-99a}, Example 2.8:
Let $K=\GF(q)$, $F=\GF(q^2)$,  $R=\M(2\times 2,K)$, and $q\ne2$. Then $F^*$ is
not normal in $R^*$. Note that according to \cite{herz-95}, Example 4.5.(2), in
this case the images of the chains under the projective representation
associated to the $(K,R)$-bimodule $K^2$ are regular spreads.

Another class of examples are the geometries $\Sigma(K,L)$, where $L$ is a
quaternion skew field and $K$ is one of its maximal commutative subfields,
studied in \cite{havl-94a}.
\end{exa}

We proceed with some examples where the chains appear as reguli. This can be
checked immediately with the help of Theorem \ref{regulus}. We also give an
explicit geometric description of all the reguli that are images of chains; the
proof of this is left to the reader.

\begin{exas}\label{regExas}
Let $K$ be a not necessarily commutative field. In the following cases the
$\Phi$-images of the chains of $\Sigma(K,R)$  are reguli in $\PP(K,U\times U)$.
\begin{enumerate}
\item Let $R=\End_K(U)$, where $K$ is embedded into $R$ via a basis,
according to (\ref{lambda1}),~(\ref{lambda2}). Then the $\Phi$-images of chains
are all reguli in $\PP(K,U\times U)$ (compare \cite{blunck-00a}, Theorem 3.4).
\item Let $R=K^n$, with componentwise addition and multiplication, let $K$ be
 embedded into $R$ via $k\mapsto (k,\ldots,k)$, and let $U=R$. The $\Phi$-images
 of  chains are exactly  those reguli in $\PP(K,U\times U)$ that have
 the lines $U_i\times U_i$  among their transversals (compare \cite{blu+h-00a},
 Example 5.3).
\item Let $R=K(\eps)=K+K\eps$, the ring of dual numbers
 over $K$, and let $U=R$.  The $\Phi$-images of chains are exactly those reguli in
 $\PP(K,U\times U)$ that have the line $T=K\eps\times K\eps$ as a transversal
 and whose unique element through $p\in T$ lies in the plane
 $p^\beta$ (compare \cite{blu+h-00a}, Example 5.4).
 \item Let $R$ be the ring of upper triangular $2\times 2$-matrices over $K$,
 with $K$ embedded as the scalar matrices. Then  $U=K^2$ is a $(K,R)$-bimodule
 in the natural way. The $\Phi$-images of  chains are exactly those
 reguli that have the line $K(0,1)\times K(0,1)$ among their transversals (compare
 \cite{blu+h-00a},  Example 5.5).

\end{enumerate}
\end{exas}

In all these examples the $\Phi$-images of the chains of $\Sigma(K,R)$ are
exactly the reguli entirely contained in $\PP(R)^\Phi$. This is not  true in
general, as the following example shows. Since in this counterexample $R$  is
infinite-dimensional over $K$, one might conjecture that if $R$ is
finite-dimensional over $K$, and the $\Phi$-images of the chains are reguli,
then all reguli in $\PP(R)^\Phi$ are obtained in this way.

\begin{exa} (Compare \cite{blu+h-00a}, Example 4.7.) Let $R=K[X]$ be the polynomial
ring over a commutative field $K$, and let $U=K(X)$ be its field of fractions.
Then $U$ is in a natural way a faithful $(K,R)$-bimodule and a faithful
$(K,U)$-bimodule. Let $\Phi_1:\PP(R)\to \cG$ and $\Phi_2:\PP(U)\to\cG$ be the
associated faithful representations. Then $\Phi_1=\overline\iota\Phi_2$, where
$\overline\iota:\PP(R)\to\PP(U)$ is induced by the natural inclusion
$\iota:R\to U$ according to \cite{blu+h-00a}, Section 3. One can easily check
that $\overline\iota$ is a bijection and that $p = R(1,0)$ and $q=R(1,X)$ are
non-distant  in $\PP(R)$ but $p^{\overline\iota}$, $q^{\overline\iota}$ are
distant in $\PP(U)$.\\ Now we consider the chain geometries $\Sigma(K,R)$ and
$\Sigma(K,U)$. The images of the chains under $\Phi_1$ and $\Phi_2$ are reguli
contained in $\PP(R)^{\Phi_1}=\PP(U)^{\Phi_2}$.  Recall that  ``distant'' means
``joined by a chain''. So the images $p^{\Phi_1}=p^{\overline\iota\Phi_2}$ and
$q^{\Phi_1}=q^{\overline\iota\Phi_2}$ lie together on a regulus $\cR$ that is
entirely contained in $\PP(R)^{\Phi_1}$, namely, the $\Phi_2$-image of a chain
in $\Sigma(K,U)$ joining $p^{\overline\iota}$ and  $q^{\overline\iota}$.
However, since $p,q\in\PP(R)$ are non-distant, they are not joined by a chain
in $\Sigma(K,R)$, and hence $\cR$ does not appear as ${\Phi_1}$-image of a
chain.
\end{exa}

In case $K=F$, the ring $R$ itself is a faithful $(K,R)$-bimodule with respect
to the regular representation. The associated projective representation  $\Phi$
is the identity.  We apply Theorem~\ref{regulus} to this situation.

\begin{thm}\label{reguliUR}
Let $U=R$ be a ring with $1$ such that $K\subset R$, $1\in K$, and let
$\phi:K\to \End_K(U)$ be given by $u\cdot k:=uk$. Then  $\PP(K)^\Phi=\PP(K)$ is
a regulus if, and only if, there exists a basis $(b_i)_{i\in I}$ of the left
vector space $U=R$ over $K$ such that each $b_i$ centralizes $K$, i.e.,
$b_ik=kb_i$ holds for all $i\in I$, $k\in K$.
\end{thm}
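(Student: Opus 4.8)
The plan is to apply Theorem~\ref{regulus} to the concrete representation $\phi:K\to\End_K(U)$ given by right multiplication, $u\cdot k = uk$, on the left vector space $U=R$. By that theorem, $\PP(K)^\Phi$ is a regulus if and only if there exists a basis $(b_i)_{i\in I}$ of $U$ and an automorphism $\alpha\in\Aut(K)$ such that each $\rho_k=k^\phi$ acts by $b_i\mapsto k^\alpha b_i$. So the whole proof reduces to matching the abstract condition (c) of Theorem~\ref{regulus} against the explicit multiplication in $R$.

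First I would prove the ``if'' direction. Suppose a basis $(b_i)_{i\in I}$ exists with $b_ik=kb_i$ for all $i$ and $k$. Then for each $k\in K$ we compute $b_i^{\rho_k}=b_i\cdot k = b_i k = k b_i$. This is exactly condition (c) of Theorem~\ref{regulus} with $\alpha=\id$, so $\PP(K)^\Phi$ is a regulus. This direction is immediate.

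For the ``only if'' direction, assume $\PP(K)^\Phi$ is a regulus. By Theorem~\ref{regulus}(c) there is a basis $(b_i)_{i\in I}$ and some $\alpha\in\Aut(K)$ with $b_i^{\rho_k}=k^\alpha b_i$, i.e.\ $b_i k = k^\alpha b_i$ for all $i\in I$, $k\in K$. The key point is to force $\alpha=\id$. Here I would exploit that $U=R$ is a \emph{ring} with $1$ and $1\in K\subset R$, so the regular representation has a distinguished vector, namely $1_R$. Writing $1=\sum_{i\in I_0} c_i b_i$ for suitable finitely many $c_i\in K$ (using that $1$ lies in the $K$-span of the basis) and applying $\rho_k$ in two ways gives $k = 1\cdot k = \sum c_i (b_i k) = \sum c_i k^\alpha b_i = k^\alpha \sum c_i b_i = k^\alpha$ (the scalars $c_i$ commute past since $K$ is a field and $k^\alpha\in K$), whence $k^\alpha=k$ for all $k$, i.e.\ $\alpha=\id$. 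Therefore $b_i k = k b_i$, which says each $b_i$ centralizes $K$, as required.

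The main obstacle I anticipate is the step isolating $\alpha=\id$: one must use the ring structure of $U=R$ (in particular the presence of the unit $1$ in the span of the basis) rather than just its bimodule structure, since the abstract Theorem~\ref{regulus} only guarantees \emph{some} $\alpha$. The computation $k=1\cdot k$ expanded through the basis is where the multiplicative identity does the essential work, pinning down $\alpha$ as the identity. I would take care that the finite expansion of $1$ and the commuting of the field scalars $c_i$ with $k^\alpha\in K$ are handled cleanly; both are routine because $K$ is a field, so I would not belabor them.
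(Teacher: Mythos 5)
Your reduction to Theorem \ref{regulus}(c) and your use of the unit element $1\in R$ are the right ideas, and the ``if'' direction is fine. But the ``only if'' direction has a genuine gap at the step where you pin down $\alpha=\id$. From $1=\sum c_ib_i$ and $b_ik=k^\alpha b_i$ you get $k=1\cdot k=\sum c_ik^\alpha b_i$, and you then pull $k^\alpha$ out to the left, writing $\sum c_ik^\alpha b_i=k^\alpha\sum c_ib_i$. This needs $c_ik^\alpha=k^\alpha c_i$, which is not available: throughout this paper $K$ is a \emph{not necessarily commutative} field, so ``$K$ is a field'' does not let the scalars $c_i$ commute past $k^\alpha$. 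What the computation actually yields, by expanding $k=k1=k\sum c_ib_i=\sum(kc_i)b_i$ and comparing coefficients with respect to the basis, is $c_ik^\alpha=kc_i$, i.e.\ $k^\alpha=c_i^{-1}kc_i$ for every nonzero $c_i$ --- so $\alpha$ is an \emph{inner} automorphism, not necessarily the identity. Concretely, take $R=K$ noncommutative with the one-element basis $b=c^{-1}$ for a noncentral $c\in K$; then $bk=k^\alpha b$ with $\alpha\neq\id$, yet your argument would conclude $\alpha=\id$, and the original basis $b$ does not centralize $K$.

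The gap is repairable and leads essentially to the paper's proof: having shown $k^\alpha=c^{-1}kc$ for some fixed $c\in K^*$, replace the basis by $b_i':=cb_i$; then $b_i'k=ck^\alpha b_i=kcb_i=kb_i'$, so the rescaled basis centralizes $K$. The paper reaches the same conclusion by a slightly different route: it observes that $T=K\times K$ is a transversal belonging to the eigenvector $1$ and the automorphism $\id_K$ (Examples \ref{exasTrsv}(d)), that in a regulus any two transversals are projectively linked, hence $\alpha=\id_K^{-1}\alpha$ is inner, and then performs the change of basis (\ref{formel}). Your $1=\sum c_ib_i$ computation is in fact a direct algebraic version of that linking argument, but you must add the rescaling step; as written, your proof only establishes the theorem for commutative $K$.
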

\begin{proof}
Let $\PP(K)^\Phi$ be a regulus. By Theorem \ref{regulus}(c) there is a basis
$(u_i)_{i\in I}$ of $U=R$ and an automorphism $\alpha\in\Aut(K)$ such that $u_i
k=k^\alpha u_i$ holds for all $k\in K$, $i\in I$. This means that the lines
$T_i=Ku_i\times Ku_i$ are transversals. By \ref{exasTrsv}(d), also $T=K\times
K$ is a transversal, belonging to the eigenvector $1\in R$ and the automorphism
$\id_K$. Since any two transversals of $\PP(K)^\Phi$ are projectively linked by
\ref{regulus}(c), this means that $\alpha$ is an inner automorphism. By an
appropriate change of  basis as in (\ref{formel}), the automorphism $\alpha$
goes over to the identity, as desired. The converse follows directly from
Theorem \ref{regulus}.
\end{proof}

A special case is the one where $K$ belongs to the centre $Z(R)$ of $R$, i.e.,
$R$ is a $K$-algebra. This is the case of ordinary chain geometries.

We consider a wider class of examples:
\begin{exa}
Let $K$ be a field and let $Z$ be the centre of $K$. Let $C$ be any
$Z$-algebra, with basis $(c_i)_{i\in I}$. Then the tensor product $R=K\otimes_Z
C$ is a $Z$-algebra  containing $K\cong K\otimes 1$. By \cite{lam-91}, Theorem
(15.1), the centralizer $Z_R(K)$ of $K$ in $R$ is $C\cong 1\otimes C$, and
$(1\otimes c_i)_{i\in I}$ is a $K$-basis of $R$ centralizing~$K$. So in this
situation, the chains of $\Sigma(K,R)$ are mapped to reguli by the projective
representation associated to the $(K,R)$-bimodule $U=R$.

Under certain conditions, also the converse  holds (cf.\ \cite{jac-89}, Theorem
4.7): Let $R$ be a ring, let $K$ a subfield of $R$, and let $Z$ be the centre
of $K$. Assume that $R$ possesses a $K$-basis consisting of elements of  the
centralizer $C:=Z_R(K)$. Then $R$ is a $Z$-algebra and $C$ is a $Z$-subalgebra
of $R$. If in addition $K$ is finite-dimensional over $Z$, then $R\cong
K\otimes_Z C$.
\end{exa}

This means that  we know all pairs $(K,R)$, with $K$  a subfield of $R$ that is
finite-dimensional over its centre, where the projective representation
associated to $U=R$ maps  chains to reguli.

\section{An application}

\newcommand{\oT}{{\omega\T}}
In this final section we show how  projective models of  generalized chain
geometries can be used in order to determine all isomorphisms for a certain
class of such geometries.

Throughout this section $K$ and $K'$ are fields. The rings of $2\times 2
$-matrices over $K$ and $K'$ are denoted by $R$ and $R'$, respectively. The
unit matrices are written as $E$ and $E'$. Each isomorphism $\kappa:K\to K'$
determines an isomorphism $R\to R'$ by putting $(c_{ij})\mapsto
(c_{ij}^\kappa)=:(c_{ij})^\kappa$. Similarly, each antiautomorphism $\omega$ of
$K$ gives rise to an antiautomorphism of $R$ with $(c_{ij})\mapsto
(c_{ji}^\omega)=:(c_{ij})^\oT$.

First we show the following preliminary result on mappings preserving
the distant-relation $\dis$ (as defined before Remark 2.6 in
\cite{blu+h-00a}):

\begin{thm}\label{distIso}
Let $R$ and $R'$ be the rings of\/ $2\times 2$-matrices over fields
$K$ and $K'$, respectively. Then the following assertions on a
mapping $\alpha:\PP(R)\to\PP(R')$ are equivalent:
\begin{enumerate}
\item $\alpha$ is a mapping of the form
           \begin{equation}\label{1}
              \PP(R)\to\PP(R') : R(A,B)\mapsto R'((A^\kappa,B^\kappa)\cdot H')
           \end{equation}
           with $\kappa:K\to K'$  an isomorphism and $H'\in\GL_2(R')$; or $\alpha$ is
           the product of a mapping
           \begin{equation}\label{2}
              \PP(R)\to\PP(R) : R(A,B)\mapsto \{(X,Y)\in R^2\mid -XB^\oT
              + YA^\oT =0 \},
           \end{equation}
           where $\omega$ is an antiautomorphism of $K$,
           with a mapping (\ref{1}).
\item $\alpha$ is a {\em distant-preserving
            bijection}, i.e.,
            $p\dis q\Rightarrow p^\alpha\dis q^\alpha$
            for all $p,q\in\PP(R)$.
\item  $\alpha$ is a bijection which is distant-preserving in both directions.
\end{enumerate}
\end{thm}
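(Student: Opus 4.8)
The plan is to prove the cycle of implications $(a)\Rightarrow(b)\Rightarrow(c)\Rightarrow(a)$, where the real content lies in the last step; the other two are essentially bookkeeping.

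For $(a)\Rightarrow(b)$ I would argue that both basic building blocks preserve distance. The maps of the form (\ref{1}) are induced by the isomorphism $\kappa:K\to K'$ (hence $R\to R'$) followed by an element of $\GL_2(R')$, and by the very definition of the distant relation via $\GL_2$ (recalled just before Theorem \ref{distIso}), such maps carry distant pairs to distant pairs in both directions. The map (\ref{2}) is the projective representation–theoretic form of the standard duality coming from an antiautomorphism $\omega$; one checks that it too respects distance, using that $\dis$ is symmetric under passing to the opposite ring. Composing, every $\alpha$ as in (a) is a distant-preserving bijection, giving (b); and since each factor is invertible with the inverse of the same type, distance is preserved in both directions, which already foreshadows that (b) will not be strictly weaker than (c) here.

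The step $(b)\Rightarrow(c)$ is where I would need a combinatorial rigidity input. A priori (b) only gives one-way preservation, so the task is to upgrade $p\dis q\Rightarrow p^\alpha\dis q^\alpha$ to an equivalence. The natural approach is to exploit the finiteness of the projective geometry behind $R=\M(2\times 2,K)$: distant pairs in $\PP(R)$ correspond to complementary pairs of lines in the $4$-dimensional space $K^2\times K^2$, i.e. to the geometry on the Grassmannian of lines, and a distant-preserving bijection of such a structure that is a bijection must in fact be an isomorphism of the distance graph. Concretely I would show that $\alpha$ maps maximal families of mutually non-distant points onto such families (these correspond to pencils/rulings), and that a bijection preserving adjacency one way and carrying these maximal cliques onto maximal cliques must preserve non-adjacency as well; hence $p^\alpha\dis q^\alpha\Rightarrow p\dis q$, which is exactly the missing direction.

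The main obstacle, and the heart of the theorem, is $(c)\Rightarrow(a)$: reconstructing the algebraic form (\ref{1}) or the composite (\ref{1})$\circ$(\ref{2}) from a purely incidence-preserving bijection. My plan is to pass to projective representations. Since $K=K'$ in spirit via the eventual isomorphism, I would first normalize $\alpha$ by an element of $\GL_2(R')$ so that it fixes the standard distant triple $R(1,0)$, $R(0,1)$, $R(1,1)$; distant triples generate, by Theorem \ref{reguliUR} and the regulus analysis of Section 3, a rigid frame. A distant-preserving bijection fixing this frame must then permute the chains through the frame, and one invokes the fundamental-theorem-type result for the chain geometry $\Sigma(K,R)$ to conclude that the induced map on the underlying field is either an isomorphism $\kappa$ (yielding a map (\ref{1})) or an antiautomorphism $\omega$ (forcing the extra duality factor (\ref{2})). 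The delicate point is separating these two cases cleanly: the antiautomorphic branch arises precisely because $R=\M(2\times 2,K)$ admits the transpose antiautomorphism, and showing that no further exotic bijections occur — i.e. that the two listed families are exhaustive — is the step I expect to require the most care, relying on the classification of line-preserving maps of $\PP(K^2\times K^2)$ together with the projective-representation machinery established earlier in the excerpt.
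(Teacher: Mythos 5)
Your overall cycle $(a)\Rightarrow(b)\Rightarrow(c)\Rightarrow(a)$ differs from the paper's, which runs $(a)\Rightarrow(c)\Rightarrow(b)\Rightarrow(a)$ and thereby never needs to upgrade one-way preservation to two-way preservation as a separate step: the paper proves $(b)\Rightarrow(a)$ in one stroke by transferring $\alpha$ via the representation $\Phi$ attached to the $(K,R)$-bimodule $U=K^2$ into a bijection $\cG\to\cG'$ between the line sets of $\PP(K,K^4)$ and $\PP(K',K'^4)$ under which ``distant'' becomes ``skew'', and then quoting Brauner and Havlicek: a bijection of line sets taking skew lines to skew lines (one direction suffices) is induced by a collineation or a correlation. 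Your proposed $(b)\Rightarrow(c)$ via maximal cliques has a genuine gap: from $p\dis q\Rightarrow p^\alpha\dis q^\alpha$ you only get that \emph{preimages} of non-distant pairs are non-distant, so there is no reason the image of a maximal set of mutually non-distant points consists of mutually non-distant points, and the claim that such cliques are carried onto such cliques is exactly what is missing, not a tool for obtaining it. This upgrade is in substance a theorem of Huang on adjacency-preserving transformations of Grassmannians, which the paper cites only as an alternative route in the case $R=R'$; it is not routine combinatorics.

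Your $(c)\Rightarrow(a)$ also does not work as described. A distant-preserving bijection need not map chains to chains: the distant relation on $\PP(R)$ is defined via $\GL_2(R)$ alone and is the same for every choice of subfield $F$, so nothing forces $\alpha$ to ``permute the chains through the frame''. Moreover, the fundamental-theorem-type statement for $\Sigma(F,R)$ that you want to invoke is Theorem \ref{auto} of the paper, which is \emph{deduced from} Theorem \ref{distIso}; using it here would be circular. The correct reconstruction is the one you mention only in your last clause, namely the classification of skew-preserving line mappings of $\PP(K,K^4)$, and that should be the main engine, not an afterthought. Two smaller points: the well-definedness of the mapping (\ref{2}) --- that the solution set of the linear equation is actually a point $R(\widetilde A,\widetilde B)$ of $\PP(R)$ --- requires the explicit computation with the inverted matrix that the paper carries out, and you pass over it; and your parenthetical that the frame is ``rigid'' by Theorem \ref{reguliUR} and Section 3 plays no role in a correct argument, since those results concern images of chains under representations, not distant-preserving bijections.
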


\begin{proof}
(a) $\Rightarrow$ (c): The mappings (\ref{1}) are induced by semilinear
bijections  $R^2\to R'^2$, whence they are distant-preserving in both
directions.

If we are given an antiautomorphism $\omega$ of $K$ then
\begin{equation}\label{3}
\Mat2{A&B\\C&D} \mapsto
\Mat2{D^\oT&-B^\oT\\-C^\oT&A^\oT}^{-1}
\end{equation}
is an automorphism of $\GL_2(R)$, as it is the product of the
contragredient automorphism $(W_{ij})\mapsto
{(W^\oT_{ji})}^{-1}$ and an inner automorphism.

Next we show that (\ref{2}) is a well-defined bijection of $\PP(R)$. Let
$R(A,B)$ be a point. Then there exists a matrix $\SMat2{A&B\\C&D}\in \GL_2(R)$.
Put $\SMat2{\widetilde A&\widetilde B\\\widetilde C&\widetilde D}
:=\SMat2{D^\oT&-B^\oT\\-C^\oT&A^\oT}^{-1}$.
With the substitution $(X,Y)=(\widetilde X,\widetilde Y) \SMat2{\widetilde
A&\widetilde B\\\widetilde C&\widetilde D}$ the linear equation appearing in
(\ref{2}) can be rewritten as
\begin{displaymath}
  (\widetilde X,\widetilde Y)  \Mat2{\widetilde A&\widetilde B\\\widetilde C&\widetilde D}
  \Mat2{\widetilde A&\widetilde B\\\widetilde C&\widetilde D}^{-1}  {0\choose E}
  = 0.
\end{displaymath}
The solutions of this equation form the submodule $R(E,0)$ of $R^2$, whence the
equation appearing in (\ref{2}) defines a point, namely the point $R(\widetilde
A,\widetilde B)$.

Similarly, the image of $R(C,D)$ under (\ref{2}) is $R(\widetilde C,\widetilde
D)$. From this observation it is immediate that (\ref{2}) defines a bijection
of $\PP(R)$ which is distant-preserving in both directions.

(c) $\Rightarrow$ (b): This is obviously true.

(b) $\Rightarrow$ (a): In a first step we translate the first part of
this proof into a projective model of $\PP(R)$: The ring $R$ acts on
the left vector space $U:=K^2$ in the natural way and thus turns
$K^2$ into a $(K,R)$-bimodule. The isomorphism $\phi:=\id_R$ yields a
faithful projective representation $\Phi:\PP(R)\to\cG$, where $\cG$
is the set of lines of the projective space $\PP(K,K^4)$. Recall that
now
\begin{displaymath}
  R(A,B)^\Phi = \mbox{left rowspace}(A,B).
\end{displaymath}
From \cite{blu+h-00a}, Remark 4.1,  $\PP(R)^\Phi=\cG$. A bijection
$\Phi':\PP(R')\to \cG'$ is defined similarly.

It is easily seen that for each  $\alpha$ given by (\ref{1}) the mapping
$\Phi^{-1}\alpha\Phi'$ is a bijection $\cG\to\cG'$ which is induced by the
collineation $\PP(K,K^4)\to \PP(K',K'^4)$ given by the matrix
$H'\in\GL_2(R')=\GL_4(K')$ and the isomorphism $\kappa:K\to K'$. Likewise, the
$\Phi$-transform of a mapping (\ref{2}) is a bijection of $\cG$ which is
induced by the correlation of $\PP(K,K^4)$ given by the matrix
$\SMat2{0&-E\\E&0}\in\GL_2(R)=\GL_4(K)$ and the antiautomorphism $\omega$ of
$K$. The linear equation (\ref{2}) now simply means the following: If a line is
given by two of its points then its correlative image arises as the
intersection of two planes.

Obviously, the  mappings (\ref{1}) yield all collineations $\PP(K,K^4)\to
\PP(K',K'^4)$, whence also all correlations $\PP(K,K^4)\to \PP(K',K'^4)$ are
obtained via  all products of a mapping (\ref{2}) with a mapping (\ref{1}).

Now let $\alpha$ be given according to (b). We read off from \cite{blu+h-00a},
Remark 4.1,  that the bijection $\Phi^{-1}\alpha\Phi':\cG\to\cG'$ maps skew
lines to skew lines. By \cite{brau-88}, Satz 2, and \cite{havl-95}, Theorem 3,
the inverse of this bijection of lines is induced either by a collineation or a
correlation $\PP(K',K'^4)\to \PP(K,K^4)$. Hence the assertion follows.
\end{proof}

Note there for the special case $R=R'$ one could also use \cite{huang-98},
Theorem 1,  in order to prove that the line mapping under consideration is
induced by a collineation or correlation. The statement of \cite{huang-98},
Theorem 1, is also valid in higher dimensions, but  unfortunately it is not
applicable to the situation of distant-preserving permutations of the
projective line over a ring of $n\times n$-matrices.

\begin{rem}
Since $R$ is a ring of stable rank $2$ (cf.\ \cite{veld-85}, 2.6), the points
of $\PP(R)$ are exactly the submodules of the form $R(A,E+AB)$ with $A,B\in R$
(cf.\ \cite{bart-89}). Hence the mapping (\ref{2}) can be written in the
explicit form
\begin{equation}\label{4}
  R(A,E+AB)\mapsto R(A^\oT,E+A^\oT B^\oT).
\end{equation}
Cf. also  \cite{herz-95}, Theorem 9.1.1, and \cite{bart-89}, Theorem 2.4.
\end{rem}

Now let $F\subset R$ and $F'\subset R'$ be fields with $E\in F$, $E'\in F'$.
Recall that $\GL_2(R')$ operates transitively on the set of chains of
$\Sigma(F',R')$. Moreover, the stabilizer in $\GL_2(R')$ of a chain acts
$3$-transitively on its set of points (see \cite{blu+h-99a}, Theorem 2.3).
Thus, if we want to find all isomorphisms of the generalized chain geometry
$\Sigma(F,R)$ onto $\Sigma(F',R')$, then it is sufficient to determine all
fundamental isomorphisms; here a mapping is called {\em fundamental} if it
takes the standard chain of $\Sigma(F,R)$ into the standard chain of
$\Sigma(F',R')$ and the points $R(E,0)$, $R(0,E)$, $R(E,E)$ to $R'(E',0')$,
$R'(0',E')$, $R'(E',E')$, respectively.

In the next theorem we consider the more general case of bijective morphisms,
where a {\em morphism} maps chains   {\em into} chains. As above, also in this
situation it suffices to study the fundamental ones.

\begin{thm}\label{auto}
Let $\Sigma(F,R)$ and $\Sigma(F',R')$ be generalized chain geometries, where
$R$ and $R'$ are rings of $2\times 2$-matrices over fields $K$ and $K'$,
respectively. Then the fundamental bijective morphisms
$\Sigma(F,R)\to\Sigma(F',R')$ are exactly the following mappings
$\PP(R)\to\PP(R')$:
           \begin{equation}\label{5}
              R(A,B)\mapsto R'\left((A^\kappa,B^\kappa)\Mat2{H'_1&0'\\0'&H'_1}\right)
           \end{equation}
           where $\kappa:K\to K'$ is an isomorphism, $H'_1\in\GL_2(K')$,
           and ${H'_1}^{-1} F^\kappa H'_1\subset F'$;
           or the product of a mapping
           \begin{equation}\label{6}
            R(A,E+AB)\mapsto R(A^\oT,E+A^\oT B^\oT),
           \end{equation}
           where $\omega$ is an antiautomorphism of $K$,
            with a mapping (\ref{5}), where
           $\kappa:K\to K'$ is an isomorphism and  $H'_1\in\GL_2(K')$
            such that
           ${H'_1}^{-1} (F^\oT)^\kappa H'_1\subset F'$.

\end{thm}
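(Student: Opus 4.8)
The plan is to reduce the problem to Theorem~\ref{distIso} by observing that a bijective morphism of generalized chain geometries is, in particular, a distant-preserving bijection, and then to carve out the extra constraints on the transformation matrix that are forced by the requirement that the standard chain go to the standard chain. First I would note that two points of $\PP(R)$ are distant precisely if they are joined by a chain (this is recalled before Remark~2.6 in the cited Part~I and again in Section~4). Hence any morphism sending chains into chains must send pairs of distant points to distant points. Since we are dealing with bijective morphisms, the inverse is again a morphism, so the map is distant-preserving in both directions; by the equivalence (b)$\Leftrightarrow$(c) of Theorem~\ref{distIso} it follows that $\alpha$ must already be one of the mappings listed there, i.e.\ either a mapping (\ref{1}) or a product of (\ref{2}) with (\ref{1}).

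Next I would impose the \emph{fundamental} and the \emph{morphism} conditions to pin down the remaining freedom. In the collineation case $\alpha$ has the form (\ref{1}) with some $H'\in\GL_2(R')$. The normalisation $R(E,0)\mapsto R'(E',0')$, $R(0,E)\mapsto R'(0',E')$, $R(E,E)\mapsto R'(E',E')$ forces $H'$ to have the block-diagonal shape $\mathrm{diag}(H'_1,H'_1)$ with a single $H'_1\in\GL_2(K')$ repeated: the first two images make $H'$ block-diagonal, and the third image identifies the two diagonal blocks up to a scalar that can be absorbed, giving exactly the matrix displayed in (\ref{5}). It then remains to express, in terms of $\kappa$ and $H'_1$, the condition that the standard chain $\PP(F)$ is carried \emph{into} the standard chain $\PP(F')$. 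Writing a generic point of the standard chain as $R(E,xE)$ with $x\in F$ and applying (\ref{5}), the image lies in $\PP(F')$ exactly when ${H'_1}^{-1}F^\kappa H'_1\subset F'$; this is the stated containment. In the second case one first composes with the correlation-type map, which in explicit form is (\ref{4})=(\ref{6}) by the Remark following Theorem~\ref{distIso}; this map fixes the three distinguished points and carries the standard chain over with $F$ replaced by $F^\oT$, after which the same analysis of the subsequent collineation (\ref{5}) yields the condition ${H'_1}^{-1}(F^\oT)^\kappa H'_1\subset F'$.

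Conversely I would verify that every mapping of the two displayed forms is indeed a fundamental bijective morphism: each is a distant-preserving bijection by Theorem~\ref{distIso}, and the containment conditions guarantee that the standard chain, and hence (by $\GL_2$-transitivity on chains, Theorem~2.3 of the Part~I reference) every chain, is mapped into a chain; the fundamental normalisation is immediate from the block-diagonal form. The main obstacle I anticipate is the bookkeeping in the ``only if'' direction: one must argue carefully that a morphism need only \emph{inject} chains into chains, yet the bijectivity still forces distant-preservation in both directions, and one must correctly reduce the general $H'\in\GL_2(R')=\GL_4(K')$ down to the block-diagonal $\mathrm{diag}(H'_1,H'_1)$ dictated by the three fundamental point conditions. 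Translating the ``image of $\PP(F)$ lies in some chain'' condition into the clean algebraic containment ${H'_1}^{-1}F^\kappa H'_1\subset F'$ — in particular checking that it is this conjugated field rather than merely a larger one that must land inside $F'$, and that the fundamental normalisation rules out any additional projective coordinate change — is where the real care is needed; the correlation case then follows by the same computation applied to $F^\oT$.
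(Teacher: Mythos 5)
Your forward direction is essentially the paper's: reduce to Theorem \ref{distIso}, use the three distinguished points to force $H'$ into the block form $\SMat2{H'_1&0'\\0'&H'_1}$, and read off the containment ${H'_1}^{-1}F^\kappa H'_1\subset F'$ from the images $R'({H'_1}^{-1}M^\kappa H'_1,E')$ of the points $R(M,E)$, $M\in F$; the correlation case is handled the same way via (\ref{4}). One caveat: your claim that the inverse of a bijective morphism is again a morphism (hence that $\alpha$ is distant-preserving in both directions) is unjustified and in general false --- if ${H'_1}^{-1}F^\kappa H'_1$ is a \emph{proper} subfield of $F'$, then $\alpha^{-1}$ maps the standard chain of $\Sigma(F',R')$ onto a proper superset of $\PP(F)$, which need not lie in any chain. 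This does no harm, since condition (b) of Theorem \ref{distIso} (distant-preservation in one direction only) already suffices, and a bijective morphism satisfies it because ``distant'' means ``joined by a chain''.

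The genuine gap is in the converse. You assert that once the standard chain goes into the standard chain, $\GL_2$-transitivity on chains gives that \emph{every} chain is mapped into a chain. Transitivity only tells you that an arbitrary chain is $\PP(F)^\gamma$ for some $\gamma\in\GL_2(R)$; to conclude that its image $(\PP(F)^\alpha)^{\alpha^{-1}\gamma\alpha}$ lies in a chain you must additionally show that $\alpha^{-1}\gamma\alpha$ is a projectivity of $\PP(R')$, i.e.\ that $\alpha$ conjugates $\PGL_2(R)$ into $\PGL_2(R')$. This equivariance step is precisely what the paper's commutative diagram establishes: passing to the model $\PP(K,K^4)$, the map $\Phi^{-1}\alpha\Phi'$ is induced by a collineation or correlation $\mu$, so $\mu^{-1}\pi\mu$ is a projective collineation of $\PP(K',K'^4)$ and hence comes from a matrix in $\GL_4(K')=\GL_2(R')$. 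For a map of type (\ref{5}) the conjugation $G\mapsto {H'}^{-1}G^\kappa H'$ is immediate, but for the correlation-type factor (\ref{2}) one needs the automorphism (\ref{3}) of $\GL_2(R)$ constructed in the proof of Theorem \ref{distIso}. Without some such argument the converse is incomplete.
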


\begin{proof}
Each fundamental bijective morphism $\alpha$ is a distant-preserving bijection.
Thus we can apply Theorem \ref{distIso}. There are two cases:

Let $\alpha$ be given according to (\ref{1}). Then $H'$ has necessarily the
form $\SMat2{H'_1&0'\\0'&H'_1}$ with $H'_1\in\GL_2(K')$, since $R(E,0)$,
$R(0,E)$, $R(E,E)$ go over to $R'(E',0')$, $R'(0',E')$, $R'(E',E')$. Any point
of the standard chain other than $R(E,0)$ has the form $R(M,E)$, where $M\in
F\subset R$. Its image point is $R'({H'_1}^{-1}M^\kappa H'_1,E')$, so that
${H'_1}^{-1}F^\kappa H'_1\subset F'$, as required.

Let $\alpha$ be a product of a mapping (\ref{2}) and a mapping
(\ref{1}). Observe that each mapping (\ref{2}) fixes $R(E,0)$,
$R(0,E)$, and $R(E,E)$. Thus the matrix $H'$ appearing in
(\ref{1}) has the form as in the previous case. Now a similar
argument as before together with (\ref{4}) the yields assertion.

For the proof of the converse we consider the following commutative diagram:
\begin{diagram}[small]
\PP(R)& & &\rTo^\alpha& & &\PP(R')\\
 &\rdTo^\Phi& & & &\ldTo^{\Phi'}& \\
 & &\cG&\rTo&\cG'& & \\
\dTo^\gamma&&\dTo& &\dTo& &\dTo_{\gamma'}\\
 & &\cG&\rTo&\cG'& & \\
 &\ruTo^\Phi& & & &\luTo^{\Phi'}& \\
\PP(R)& & &\rTo^\alpha& & &\PP(R')
\end{diagram}

Let $\alpha:\PP(R)\to\PP(R')$ be as in the assertion, and let  $\gamma$ be a
projectivity of $\PP(R)$ given by a matrix in $\GL_2(R)$. The same matrix,
considered as an element of $\GL_4(K)$, yields a projective collineation $\pi$
of $\PP(K,K^4)$, which gives rise to the permutation $\Phi^{-1}\gamma\Phi$ of
the line set $\cG$. Note that this does not depend on the choice of the matrix
because the kernels of the actions of $\GL_2(R)$ on $\PP(R)$ and on $\cG$
coincide as the centre of $K$ equals the centre of $R$. Moreover, the proof of
Theorem \ref{distIso} shows that $\Phi^{-1}\alpha\Phi':\cG\to\cG'$ is induced
by a collineation or correlation $\mu:\PP(K,K^4)\to\PP(K',K'^4)$. So
$\mu^{-1}\pi\mu$ is a projective collineation of $\PP(K',K'^4)$, whence it can
be described by a matrix in $\GL_4(K')=\GL_2(R')$. As before, this matrix
yields the projectivity $\gamma':=\alpha^{-1}\gamma\alpha$ of $\PP(R')$.

Now the assertion easily follows, because obviously $\alpha$ maps the standard
chain $\PP(F)$ of  $\Sigma(F,R)$ into the standard chain of $\Sigma(F',R')$,
and any other chain of $\Sigma(F,R)$  has the form $\PP(F)^\gamma$ for some
$\gamma$ as in the diagram.
\end{proof}

We want to mention here that the second direction of this theorem also follows
from the more general statement in \cite{bart-89}, Theorem (2.4).

Of course Theorem \ref{auto} also yields an algebraic description of all
fundamental isomorphisms. Then the conditions on the image of $F$ read
${H'_1}^{-1}F^\kappa H_1'=F'$ and ${H'_1}^{-1}(F^\oT)^\kappa H_1'=F'$,
respectively.

\bigskip

Institut f\"ur Geometrie\\
Technische Universit\"at\\
Wiedner Hauptstra{\ss}e 8--10\\
A--1040 Wien,
Austria
\end{document}